\newtheorem{theorem}{Theorem}[section]
\newtheorem{conjecture}[theorem]{Conjecture}
\newtheorem{corollary}[theorem]{Corollary}
\newtheorem{lemma}[theorem]{Lemma}
\newtheorem{exercise}[theorem]{Exercise}
\theoremstyle{definition}
\newtheorem{question}[theorem]{Question}
\newcommand{\cgF}{\mathcal{F}}
\newcommand{\cgG}{\mathcal{G}}
\newcommand{\cgR}{\mathcal{R}}
\newcommand{\Inc}{\operatorname{Inc}}
\newcommand{\pval}{\operatorname{val}}
\newcommand{\close}[1]{\mathrel{\rhd_{#1}}}
\def\figurescale{.6}
\renewenvironment{enumerate}{\begin{enumorig}[label=\textup{(\arabic*)}, noitemsep, topsep=1mm, labelindent=.5em, leftmargin=*]}{\end{enumorig}}
\let\old@setaddresses\@setaddresses
\def\@setaddresses{\bigskip\bgroup\parindent 0pt\let\scshape\relax\old@setaddresses\egroup}
\title{Tree-width and dimension}
\author[G.~Joret\and P.~Micek\and K.~G.~Milans\and W.~T.~Trotter\and B.~Walczak\and R.~Wang]{Gwena\"{e}l Joret\and Piotr Micek\and Kevin G. Milans\and William T. Trotter\and Bartosz Walczak\and Ruidong Wang}
\thanks{Bartosz Walczak was supported by Polish National Science Center
grant 2011/03/N/ST6/03111.}
\address[G.~Joret]{Department of Mathematics and Statistics\\
The University of Melbourne\\Melbourne\\Australia}
\email{gjoret@ulb.ac.be}
\address[P.~Micek, B.~Walczak]{Theoretical Computer Science Department\\Faculty of Mathematics and
Computer Science\\Jagiellonian University\\Krak\'ow\\Poland}
\email{micek@tcs.uj.edu.pl, walczak@tcs.uj.edu.pl}
\address[K.~G.~Milans]{Department of Mathematics\\West Virginia University\\Morgantown, West
Virginia 26505\\USA}
\email{milans@math.wvu.edu}
\address[W.~T.~Trotter, R.~Wang]{School of Mathematics\\Georgia Institute of Technology\\Atlanta,
Georgia 30332\\USA}
\email{trotter@math.gatech.edu, rwang49@math.gatech.edu}
\begin{document}

\begin{abstract} 
Over the last 30 years, researchers have investigated
connections between dimension for posets and planarity for graphs. Here we
extend this line of research to the structural graph theory parameter tree-width
by proving that the dimension of a finite poset is bounded in terms of its
height and the tree-width of its cover graph.
\end{abstract}

\maketitle

\section{Introduction}

In this paper, we investigate combinatorial problems involving finite graphs and
partially ordered sets (posets), linking the well-studied concept of tree-width
for graphs with the concept of dimension for posets. The following is
our main result.

\begin{theorem}\label{thm:main} 
For every pair\/ $(t,h)$ of positive integers,
there exists a least positive integer\/ $d = d(t,h)$ so that if\/ $P$ is a poset of
height at most\/ $h$ and the tree-width of the cover graph of\/ $P$ is at most\/ $t$,
then the dimension of\/ $P$ is at most\/~$d$. In particular, we have\/ $d(t,h) \le
6\cdot 2^{8t^{4h-2}}$.
\end{theorem}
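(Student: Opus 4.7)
The plan is to induct on the height $h$, with the goal of establishing a recursion of roughly the form $d(t,h) \le d(t,h-1)^{O(t^4)}$; iterating this from the trivial base case $d(t,1) \le 2$ (antichains have dimension at most $2$) yields the stated doubly-exponential bound $6 \cdot 2^{8t^{4h-2}}$.

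For the inductive step, let $P$ be a poset of height $h$ with cover graph $G$ of tree-width at most $t$, and fix a tree decomposition $(T,\{B_u\}_{u \in V(T)})$ of $G$ of width at most $t$. Since every comparability $x<y$ in $P$ is witnessed by a saturated chain in $G$, and every such chain corresponds to a walk in $T$ through bags containing consecutive elements, each comparability has a natural ``route'' in $T$. I would focus on \emph{critical pairs} $(a,b)$ --- a family of linear extensions reversing every critical pair suffices to certify the dimension --- and attach to each such pair a type recording a choice of bag $B_{u(a)}$ containing $a$ and $B_{u(b)}$ containing $b$, the sub-path in $T$ joining these bags, and the \emph{signature} of $a$ and $b$ with respect to separator bags along this path, namely which separator elements lie above or below $a$ (respectively $b$) in $P$. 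Since each separator has size at most $t+1$, there are only $2^{O(t)}$ signatures per bag pair.

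Within a single signature class, the elements $a$ and $b$ behave uniformly with respect to the relevant separator: all first coordinates $a$ sharing a signature have the same comparabilities with separator vertices, and similarly for second coordinates $b$. This should allow one to ``quotient'' the portion of $P$ strictly between the separator bag and the $a$'s (resp.\ the $b$'s) into a single bipartite layer, producing a poset of height at most $h-1$ to which the inductive hypothesis applies. A linear extension of this quotient, glued with a fixed ordering of the separator elements, would reverse all critical pairs in one signature class; summing over the $2^{O(t)}$ signatures and over $t^{O(1)}$ choices of bag-pair accounts for the $t^{O(1)}$ factor loss in the exponent.

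The main obstacle, I expect, is this last step: merging the linear extensions obtained within each signature class into a global family of size $O(d(t,h-1)^{t^{O(1)}})$. A linear extension must simultaneously reverse critical pairs of many different signatures without creating cycles, and different signatures interact through shared separator bags. Handling this is likely to require an alternating-cycle analysis or a Ramsey-type application that exploits the tree structure of $T$, paralleling how the Streib--Trotter argument for planar posets of bounded height exploits a plane embedding. Establishing cleanly that the tree structure of $T$ is a sufficient substitute for planarity is, in my view, the technical heart of the proof.
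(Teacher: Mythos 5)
There is a genuine gap here, and in fact two of the quantitative claims on which your plan rests are already off before you reach the step you yourself flag as unresolved. First, the number of ``types'' you propose is not bounded in terms of $t$ and $h$: the tree $T$ in a tree decomposition can have arbitrarily many nodes, so the number of choices of a bag pair $(B_{u(a)},B_{u(b)})$ is $|V(T)|^2$, not $t^{O(1)}$; and if the signature records the comparabilities of $a$ and $b$ with the separator elements of \emph{every} bag along the $T$-path joining $u(a)$ to $u(b)$, the number of signatures per pair is exponential in the length of that path, which is also unbounded. The paper's proof is engineered precisely to avoid this: it never uses the identity of bags or of paths in $T$. Instead it roots $T$, assigns to each element $x$ the root $r(x)$ of its subtree, defines a bounded-radius reachability relation $x\close{k}y$ (descending induced paths of length at most $k$ in the intersection graph), and greedily colors the ground set with at most $1+t+\dots+t^{2h-2}$ colors so that elements related by $\close{2h-2}$ get distinct colors. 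Signatures are then built from these colors plus a constant amount of ``left of / below'' information about $r(x)$ and $r(y)$ in the rooted tree, which is how the count $6\cdot 2^{8t^{4h-2}}$ arises.

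Second, the inductive skeleton is not carried out and is doubtful as stated: collapsing the part of $P$ between a separator bag and the minimal (or maximal) elements of a signature class into a ``bipartite layer'' does not obviously produce a poset, let alone one whose cover graph still has tree-width at most $t$, so the recursion $d(t,h)\le d(t,h-1)^{O(t^4)}$ is unsupported. The paper does no induction on height at all; the height enters only through the radius $2h-2$ of the relation $\close{2h-2}$ and through a ``low-point'' lemma asserting that whenever $x\le y$ in $P$ there is a $z$ with $x\le z\le y$, $x\close{h-1}z$ and $y\close{h-1}z$. Finally, the step you defer as ``the technical heart'' --- showing that the pairs of a fixed signature can be simultaneously reversed --- is exactly where the actual proof lives: it invokes the Trotter--Moore criterion that a set of incomparable pairs is reversible if and only if it contains no alternating cycle, and then kills alternating cycles within each of six signature cases by combining the low-point lemma with the fact that elements of equal $\tau$-color have identical comparabilities to their common $\close{2h-2}$-neighbors. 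No Ramsey-type argument is needed. As it stands, your proposal identifies the right difficulty but does not supply the mechanism that overcomes it.
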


The remainder of this paper is organized as follows. In the next section, we
provide a brief summary of essential notation and terminology for posets and
dimension. This is followed by an even more compact section on graphs and
tree-width. These sections are included since we anticipate that many readers
will be quite familiar with one of these topics but less so with the other. 
With these basics in hand, we discuss in Section~\ref{sec:motivation} the background
behind this line of research and the motivation for our principal theorem. The
proof of our main theorem is given in Section~\ref{sec:proof}, and we discuss
some open problems in Section~\ref{sec:questions}.

\section{Posets and Dimension}

A \textit{partially ordered set} (here we use the short term \textit{poset}) is
a set $P$ equipped with a reflexive, antisymmetric and transitive binary
relation $\le$. Elements of $P$ are called points and here we will also call
them vertices, since we will often consider graphs whose vertex set is the set
of elements of $P$. When the poset $P$ is fixed throughout the discussion, we
abbreviate the statement $x\le y$ in $P$ by just writing $x\le y$. The notation
$x<y$ means of course $x\le y$ and $x\neq y$. These notations are reversible in
the obvious manner, i.e., $x>y$ means the same as $y<x$. 

We say $x$ \textit{covers} $y$ (also $y$ is 
\textit{covered by} $x$) when $x>y$, and there is no point $z$ with $x>z>y$. 
Also, we associate with a poset $P$ a \textit{cover graph} having
the same vertex set as $P$.  The cover graph of $P$ has
an edge $xy$ when one of $x$ and $y$ covers the other.
A drawing (typically, we consider only drawings with straight line 
segments for the edges) of the cover graph of a poset $P$ is called 
an \textit{order diagram} (also, a \textit{Hasse diagram}) if the point 
in the plane corresponding to the point $x$ is higher than the point 
corresponding to the point $y$ when $x$ covers $y$ in
$P$. We show in Figure~\ref{fig:same-cover-graph} order 
diagrams for two different posets, both with the same cover graph.

\begin{figure} 
\begin{center} 
\includegraphics[scale=\figurescale]{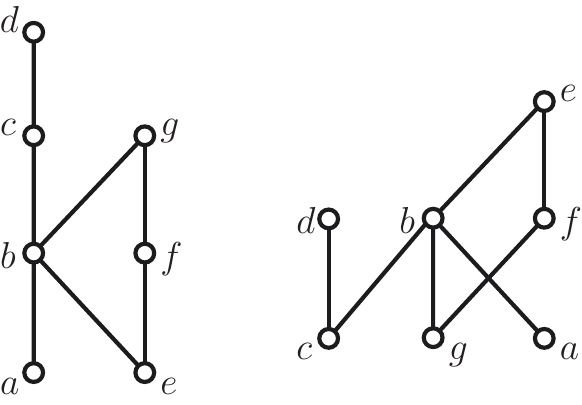}
\end{center} 
\caption{Two posets with the same cover graph
\label{fig:same-cover-graph}} 
\end{figure}

When $x$ and $y$ are distinct points in a poset $P$, and either $x<y$ or $y<x$, 
we say $x$ and $y$ are \textit{comparable}. When $x$ and $y$ 
are distinct points in $P$, and they are not comparable, we say they 
are \textit{incomparable} and write $x\parallel y$ . We use the notation $\Inc(P)$ 
for the set of all ordered pairs $(x,y)$ with $x\parallel y$. 

An element $a$ in a poset $P$ is \textit{minimal}, respectively \textit{maximal}
when there is no point $x$ with $x<a$, respectively $x>a$. When $Q$ is a subset
of a poset $P$, the restriction of the binary relation $\le$ to $Q$ is a poset
and we call this a \textit{subposet} of $P$. A poset $P$ is called a
\textit{linear order} (also a \textit{total order}) if $\Inc(P)=\emptyset$. When
$Q$ is a subposet of $P$ and $Q$ is a linear order, it is customary to call $Q$
a \textit{chain}. The largest positive integer $h$ for which $P$ has a subposet
$Q$ on $h$ points which is a chain in $P$ is called the \textit{height} of $P$.

A poset $P$ is called an \textit{antichain} if it has height~$1$, i.e., $x\parallel
y$ for all $x$ and $y$ with $x\neq y$. The largest integer $w$ for which $P$
contains a subposet on $w$ points which is an antichain is called the
\textit{width} of $P$. The classic theorem of Dilworth~\cite{bib:Dilworth}
asserts that a poset of width $w$ can be partitioned into
$w$ chains. Dually, Mirsky~\cite{bib:Mirsky} proved that a poset of height~$h$
can be partitioned into $h$ antichains.

Let $P$ and $L$ be posets. We call $L$ a \textit{linear extension} of $P$ when 
$L$ has the same ground set as $P$, $L$ is a linear order, and $x<y$ in $L$ whenever
$x<y$ in $P$. A family $\cgR=\{L_1,L_2,\dots,L_t\}$ of linear extensions of $P$
is called a \textit{realizer} of $P$ if $x<y$ in $P$ if and only if $x<y$ in
$L_i$ for each $i=1,2,\dots,t$. Clearly, a family $\cgR=\{L_1,L_2,\dots,L_t\}$
of linear extensions of $P$ is a realizer if and only if for each
$(x,y)\in\Inc(P)$, there is some $i$ with $1\le i\le t$ such that $x>y$ in
$L_i$.

Dushnik and Miller~\cite{bib:Dushnik-Miller} defined the \textit{dimension} of
$P$, denoted $\dim(P)$, as the least positive integer $t$ for which $P$ has a
realizer $\cgR$ with $|\cgR|=t$. Evidently, $\dim(P)=1$ if and only if $P$ is a
linear order. Also, when $P$ is a non-trivial antichain, $\dim(P)=2$ as
evidenced by the realizer $\{L, L^d\}$ where $L$ is an arbitrary linear order on
the ground set of $P$ and $L^d$ is the \textit{dual} of $L$, i.e., $x>y$ in
$L^d$ if and only if $x<y$ in $L$.

In~\cite{bib:Hiraguchi}, Hiraguchi used Dilworth's theorem to show that the
dimension of a poset never exceeds its width. Hiraguchi also proved that if $P$
is a poset on $n$ points with $n\ge4$, then $\dim(P)\le\lfloor n/2\rfloor$. Both
these inequalities are tight, as witnessed by a family of posets called
\textit{standard examples} and first studied in~\cite{bib:Dushnik-Miller}. As
these posets play an important role later in this paper, we include here some
details on their structure and properties.
 
For $d\ge2$, the standard example $S_d$ is a height $2$ poset with minimal
elements $\{a_1,a_2,\dots,a_d\}$ and maximal elements $\{b_1, b_2,\dots,b_d\}$.
The relation $\le$ is defined on $S_d$ by setting $a_i<b_j$ if and only if
$i\neq j$, for all $i,j=1,2,\dots,d$. For each $d\ge2$, the width of $S_d$ is
$d$ so $\dim(S_d)\le d$. On the other hand, $\dim(S_d)\ge d$. This follows from
the observation that if $L$ is a linear extension of $S_d$, there can only be
one integer $i$ with $1\le i\le d$ and $a_i>b_i$ in $L$. Moreover, when $d\ge3$,
it is easy to see that $S_d$ is \textit{$d$-irreducible}, i.e., removing any
point from $S_d$ lowers the dimension to $d-1$.

There is a natural notion of \textit{isomorphism} for posets, and it obvious
that isomorphic posets have the same dimension. So it is natural to say that a
poset $P$ \textit{contains} a poset $Q$ when there is a subposet of $P$ which
is isomorphic to $Q$. In this vein, a poset $P$ has large dimension when it
contains a large standard example. But this is far from necessary.

A poset $P$ is called an \textit{interval order} when there is a family
$\{[a_x,b_x]: x\in P\}$ of closed intervals of the real line $\mathbb{R}$ so
that $x<y$ in $P$ if and only if $b_x<a_y$ in $\mathbb{R}$.
Fishburn~\cite{bib:Fishburn} showed that a poset $P$ is an interval order if and
only if it does not contain the standard example $S_2$.
In~\cite{bib:Furedi-Hajnal-Rodl-Trotter}, F\"{u}redi, Hajnal, R\"{o}dl and Trotter
show that the maximum dimension of an interval order $P$ of height~$h$ is
$\lg\lg h + (1/2+o(1))\lg\lg\lg h$. In particular, note that in order for an
interval order to have large dimension, it must have very large height.

The standard examples show that in general, large height is not necessary for
large dimension, and in~\cite{bib:Felsner-Li-Trotter}, Felsner, Li and Trotter
show that for every pair $(g,d)$ of positive integers, there is a height~$2$
poset $P$ with $\dim(P)\ge d$ so that the girth of the cover graph of $P$ is at
least $g$. The posets resulting from this construction contain $S_2$ but they do
not contain $S_3$, when $g>6$.

Although cover graphs are useful in providing diagrams of posets, they
do not seem to tell us much about the combinatorial properties of the
posets associated with them.  For example, the two posets shown
in Figure~\ref{fig:same-cover-graph} have the same
cover graph.  However, the poset on the left has height~$4$, width~$2$ and
$21$ linear extensions, while the poset on the right has height~$3$, width~$3$
and $84$ linear extensions.  Both posets have dimension~$2$.

At the extreme, a linear order on
$n$ points has height~$n$, width~$1$ and of course, a unique linear
extension.  However, when $n\ge2$, the associated cover graph is bipartite, and the
height~$2$ poset with this same cover graph is called a \textit{fence}.  
Now the width is $\lceil n/2\rceil$ and the number of linear extensions
is exponentially large in $n$.  On the other hand, the dimension of a fence
is~$2$ when $n\ge3$, so based only on these observations, one might conjecture 
that posets with the same cover graph have approximately the same dimension.  
But even this is not true.  Later in the paper, we will show that for each 
$d\ge1$, there are two posets having the same cover graph, one having 
dimension~$2$ and the other having dimension at least~$d$.

However, there is another natural way to associate a graph with a
poset.  Like the cover graph, the \textit{comparability graph} of $P$
has the same vertex set as $P$ but now we make $xy$ an edge if $x$ and
$y$ are comparable.  The comparability graph of a poset contains the
cover graph as a subgraph.  Furthermore, if $P$ and $Q$ are posets with 
isomorphic comparability graphs, then they have the same height, width, 
number of linear extensions and dimension.  The fact that they have the same 
height and width is immediate.  The fact that they have the same number of linear 
extensions and the same dimension follows in a straightforward manner from 
the pioneering work of Gallai~\cite{bib:Gallai} on comparability graphs.

With these remarks in mind, and with no additional background information
to suggest otherwise, the principal result of this paper would then have to be 
viewed as a surprise.

\section{Graphs and Tree-Width}\label{s:tree-width}

In this paper, we consider only finite graphs without loops or multiple edges,
and we assume that readers are familiar with basic concepts such as trees,
paths, cycles, complete graphs, subgraphs, induced subgraphs, components,
chromatic number, girth, genus, distance and diameter. Given a graph $G$, an
induced subgraph of $G$ is determined entirely by its vertex set. In particular,
when $T$ is a tree, we will identify subtrees of $T$ just by specifying their
vertex sets. So when $T'$ and $T''$ are subtrees of a tree $T$, the statement
$T'\cap T''\neq\emptyset$ just means that $T'$ and $T''$ have one or more
vertices of $T$ in common.

Let $G$ be a graph with vertex set $V$ and edge set $E$. The
\textit{tree-width}\footnote{We refer the reader to the text by
Diestel~\cite{bib:Diestel} for a concise exposition of some of the key concepts
behind this parameter. Diestel also provides interesting details on its history
and the twenty year time period spanned by Robertson and Seymour's proof of the
Graph Minor Theorem. Also our notation for tree-width and some of our examples
are taken from exercises in this text.} of $G$ is the least positive integer $t$
for which there is a tree $T$ and a family $\{T(x): x\in V\}$ of non-empty
subtrees of $T$ so that
\begin{enumerate}
\item for all vertices $u$ in $T$, $|\{x\in V:u\in
T(x)\}|\le t+1$,
\item $T(x)\cap T(y)\neq\emptyset$ for all $xy\in E$.
\end{enumerate}

Trivially, a graph has tree-width~$0$ if and only if it has no edges, while
the tree-width of the complete graph $K_n$ on $n$ vertices is $n-1$ for
all $n\ge1$.  Furthermore,
if $G=(V,E)$ is a tree with at least one edge, then the tree-width 
of $G$ is~$1$. To see this, simply
subdivide each edge $e=xy$ in $E$ by inserting a new vertex $m_{xy}$ in the
interior of~$e$. Let $T$ denote the resulting tree. Then for each $x\in V$, take
$T(x)$ as the subtree of $T$ with vertex set $\{x\}\cup\{m_{xy}:xy\in E\}$ (each
$T(x)$ is a star). Conversely, it is easy to see that a graph $G$ has
tree-width at most~$1$ if and only if it is acyclic.

Consider the following three basic operations on a graph: (1)~delete an edge;
(2)~delete a vertex; (3)~contract an edge. Given a graph $G$, any graph $H$ that
can be obtained from $G$ by applying a sequence of these basic operations is
called a \textit{minor} of $G$. The following fundamentally important theorem,
called the Graph Minor Theorem, is due to Robertson and
Seymour~\cite{bib:Robertson-Seymour}\footnote{The proof given by 
Robertson and Seymour for the Graph Minor Theorem appears in a series 
of papers published over the time span 1983 through 2004, and we cite 
here the culminating paper in that series.}.

\begin{theorem}\label{thm:Robertson-Seymour} 
If\/ $\{G_n:n\ge 1\}$ is an infinite sequence of graphs, then there are integers\/ 
$i$ and\/ $j$ with\/ $1\le i<j$ so that\/ $G_i$ is isomorphic to a minor of\/ $G_j$. 
\end{theorem}

A class $\cgG$ of graphs is \textit{minor-closed} if $H$ is in $\cgG$ whenever
$G$ is in $\cgG$ and $H$ is isomorphic to a minor of $G$. Examples of minor
closed classes of graphs include the family of all planar graphs and, more
generally, for fixed $g\ge0$, the family of all graphs having genus at most~$g$.
Also, it is easy to see that for each $t\ge1$, the class of all graphs having
tree-width at most~$t$ is minor-closed.

Any proper minor-closed class of graphs admits a characterization by
``forbidden minors'', i.e., a minimum family $\cgF$ of graphs such that a graph
$G$ belongs to $\cgG$ if and only if it does not contain a minor isomorphic to a
graph in $\cgF$. By the Graph Minor Theorem, the class $\cgF$ is finite. The
classic theorem of Wagner~\cite{bib:Wagner} 
asserts that the list of forbidden minors for the
class of planar graphs consists of the complete graph $K_5$ and the complete
bipartite graph $K_{3,3}$.

Planar graphs can have large tree-width.  Note that any bipartite graph
is both the cover graph and the comparability graph of a height~$2$
poset.  In particular, the $n\times n$ planar grid is bipartite and
has tree-width $n$ (see Diestel~\cite{bib:Diestel}, Exercises~14 and~21
on page~369). However, the tree-width of a planar graph is bounded
in terms of its diameter\footnote{This result is implicit in the work of
Baker~\cite{bib:Baker} and made explicit by Bodlaender in~\cite{bib:Bodlaender}.}.
Classes of graphs where tree-width is bounded in terms of diameter are said to
satisfy the \textit{diameter tree-width property} (also called the
\textit{bounded local tree-width property}). 

The concept of \textit{path-width} for graphs is defined just like tree-width
except that it is required that the tree $T$ be a path, and of course the
subtrees of $T$ are then just subpaths of $T$. Trivially, the tree-width of a
graph is at most its path-width. However, the tree-width of an outerplanar
graph is at most~$2$ (this follows from the observation that in a maximal
outerplanar graph, there is always a vertex $x$ of degree two such that the
neighbors of $x$ are adjacent to each other). On the other hand, outerplanar
graphs can have arbitrarily large path-width. In fact, trees 
can have arbitrarily large path-width (see Diestel~\cite{bib:Diestel}, Exercise~31
on page 370).

\section{Background and Motivation}\label{sec:motivation}

A poset $P$ is \textit{planar} if its order diagram can be drawn without edge
crossings in the plane. In Figure~\ref{fig:non-planar-poset}, we show on
the left the order diagram of a height~$3$ nonplanar poset.  However,
the cover graph of this poset is planar as witnessed by the drawing on the
right. 

\begin{figure} 
\begin{center} 
\includegraphics[scale=\figurescale]{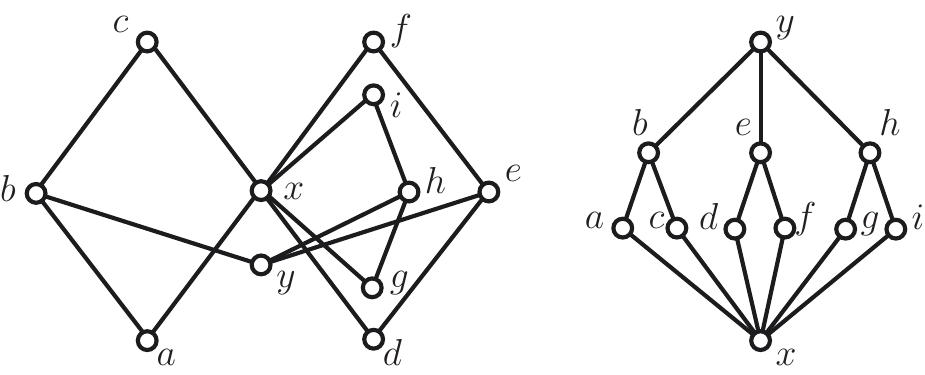}
\end{center} 
\caption{A non-planar poset with planar cover graph
\label{fig:non-planar-poset}}
\end{figure}

We note that if $P$ is a
height~$2$ poset, then $P$ is planar if and only if its cover graph is
planar~\cite{bib:Moore, bib:DiBattista-Liu-Rival}. We also note that it
is NP-complete to test whether a poset is planar~\cite{bib:Garg-Tamassia}, while
there are linear-time algorithms for testing whether a graph is
planar~\cite{bib:Hopcroft-Tarjan}. Also, it is NP-complete to test whether a
graph is a cover graph~\cite{bib:Brightwell, bib:Nesetril-Rodl}.

When $P$ is a poset with only one minimal element, this single element is usually
called a \textit{zero}.  Similarly, in a poset with only one maximal element,
this element is called a \textit{one}.  The first result linking planarity 
and dimension is the following theorem of Baker, Fishburn and 
Roberts~\cite{bib:Baker-Fishburn-Roberts}.

\begin{theorem}\label{thm:Baker-Fishburn-Roberts}
If\/ $P$ is a planar poset with a zero and a one, then\/
$\dim(P)\le 2$.
\end{theorem}

Subsequently, Trotter and Moore~\cite{bib:Trotter-Moore} proved the following
extension.

\begin{theorem}\label{thm:Trotter-Moore}
If\/ $P$ is a planar poset with a zero or a one, then\/
$\dim(P)\le 3$.
\end{theorem}

Trotter and Moore~\cite{bib:Trotter-Moore} also obtained the
following result as an immediate corollary to the preceding theorem.

\begin{corollary}\label{cor:Trotter-Moore}
If\/ $P$ is a poset whose cover graph is a tree, then\/ $\dim(P)\le 3$.
\end{corollary}

With the benefit of hindsight, one can argue that the line of research
carried out in this paper might reasonably have been triggered 35 years 
ago, based solely on possible extensions to Corollary~\ref{cor:Trotter-Moore}.

It is an easy exercise to show that the standard example $S_d$ is planar when
$d\le 4$, and as a consequence, there are $4$-dimensional planar posets.
On the other hand,  $S_d$ is non-planar when $d\ge5$. 
For a brief time in the late 1970's, it was thought that it might be the 
case that $\dim(P)\le4$ whenever $P$ is a planar poset.

However, in 1981, Kelly~\cite{bib:Kelly} showed that for each $d\ge5$, the
standard example $S_d$ is a subposet of a planar poset $P_d$. We illustrate
Kelly's construction in Figure~\ref{fig:Kelly} for the specific value $d=6$.

\begin{figure} 
\begin{center} 
\includegraphics[scale=\figurescale]{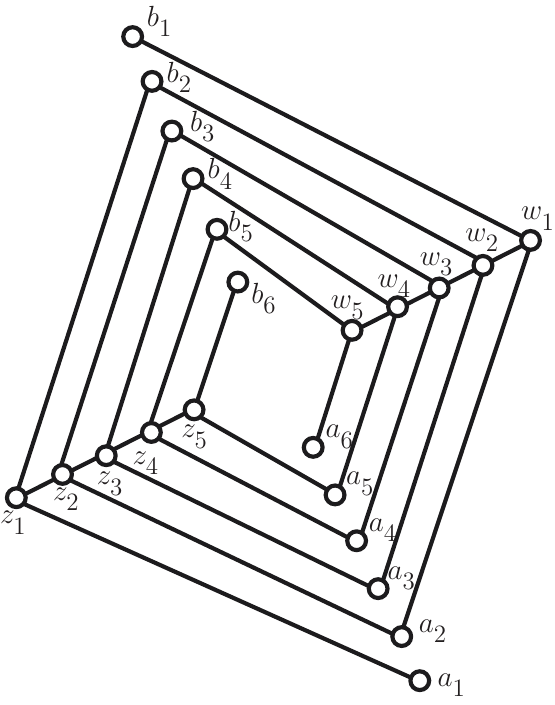}
\end{center}
\caption{Kelly's construction\label{fig:Kelly}}
\end{figure}

We pause here to answer a question raised earlier concerning the
dimension of posets with the same cover graph. Specifically, we show
that for each $d\ge2$, there are posets $Q_d$ and $Q'_d$ with the same
cover graph with $\dim(Q'_d)=2$ and $\dim(Q_d)\ge d$.  
First, we consider a poset $Q_d$ formed by modifying Kelly's
example as follows.  For each $i=1,2,\dots,d-1$, we
add two new minimal points $u_i$ and $v_i$ with
$u_i$ covered by $b_i$ and $b_{i+1}$, while $v_i$ is covered by
$a_i$ and $a_{i+1}$.  Clearly, $P_d$ is a subposet of $Q_d$ so that 
$\dim(Q_d)\ge d$. 

On the other hand, there are exponentially many posets having the same cover 
graph as $Q_d$.  One of them, which we denote $Q'_d$, has $b_i>u_i>b_{i+1}$ and 
$a_{i+1}>v_i>a_i$, for each $i=1,2,\dots,d-1$.  Obviously, both
$Q_d$ and $Q'_d$ are planar poset as witnessed by trivial modifications
to the diagram for $P_d$ given in Figure~\ref{fig:Kelly}.  Moreover, in
$Q'_d$, the point $a_1$ is now a zero and the point $b_d$ is now a one.
So by Theorem~\ref{thm:Baker-Fishburn-Roberts},
$\dim(Q'_d)=2$.

Returning to the general subject of the dimension of posets with
planar cover graphs, Felsner, Li and Trotter~\cite{bib:Felsner-Li-Trotter} 
proved the following result in 2010:

\begin{theorem}\label{thm:Felsner-Li-Trotter} 
Let\/ $P$ be poset of height\/~$2$. If the cover graph of\/ $P$ is planar, then\/ 
$\dim(P)\le4$. 
\end{theorem}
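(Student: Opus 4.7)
The plan is to reformulate the claim via critical pairs and then exploit planarity of the cover graph through a planar embedding. Since $P$ has height~$2$, let $A$ and $B$ denote the sets of minimal and maximal elements of $P$; the cover graph $G$ is the bipartite graph on $A\cup B$ whose edges are exactly the comparabilities of $P$. A \emph{critical pair} is a pair $(a,b)\in A\times B$ with $ab\notin E(G)$, and a linear extension $L$ of $P$ reverses such a pair when $b<_L a$. A standard fact for height~$2$ posets is that a single linear extension can simultaneously reverse a set $S$ of critical pairs iff $S$ contains no \emph{alternating cycle}, meaning a sequence $(a_1,b_1),\ldots,(a_k,b_k)$ in $S$ with $k\ge 2$ and $a_ib_{i+1}\in E(G)$ for all $i$ (indices modulo $k$). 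Thus $\dim(P)\le 4$ will follow once the critical pairs are partitioned into four classes, each free of alternating cycles.

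The first step is to fix a planar embedding of $G$. Bipartiteness forces every face to have even length at least $4$ and gives, around each vertex, a well-defined cyclic order of its neighbors. Using this embedding I would define a \emph{position function} on critical pairs that records how $a$ sits relative to $b$: for instance, the face of the subdrawing of $G$ that separates them, or a canonical direction extracted from a Schnyder-style decomposition of $G$. The four classes of critical pairs are then obtained as the level sets of this position function, designed to take at most four values.

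The second step is to verify that within each class no alternating cycle can occur. An alternating cycle in one class corresponds, in $G$, to a closed walk alternating between nonedges (the critical pairs) and cover edges; drawing the nonedges consistently with the class definition turns such a walk into a closed curve in the plane. A well-chosen position function should make any such curve either self-intersect or separate a vertex from itself in a way that is incompatible with the planar embedding, thereby ruling out the alternating cycle. The principal obstacle is the definition of the four classes: any naive local assignment (say, by the face containing $a$ next to $b$) tends to admit long alternating cycles that wind through the embedding, so the assignment must package enough global planar information --- most likely through a Schnyder-type decomposition of $G$, or a reduction to a proper $4$-coloring of an auxiliary planar graph via the Four Color Theorem --- to make the topological obstruction work uniformly. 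Getting this definition right so that the alternating-cycle ban reduces to a clean planarity argument is where the main difficulty lies.
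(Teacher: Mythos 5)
Your setup is standard and correct as far as it goes: reducing to critical pairs, characterizing reversibility by the absence of alternating cycles, and aiming to partition the critical pairs into four reversible classes is exactly the right frame for bounding dimension by $4$. But the proposal stops precisely where the theorem actually lives. You never define the four classes: the ``position function'' is described only by candidate possibilities (a separating face, or ``a canonical direction extracted from a Schnyder-style decomposition''), and the verification that each class excludes alternating cycles is asserted only conditionally (``a well-chosen position function should make any such curve self-intersect''). You concede the point yourself in your final sentence. As written there is no construction and no argument, only a specification of what a construction would have to accomplish; and since, as you note, the naive local assignments really do admit long alternating cycles that wind through the embedding, the difficulty you defer is the entire content of the theorem. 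The suggestion to route through the Four Color Theorem is also a red herring: a proper $4$-coloring of an auxiliary planar graph controls pairs of critical pairs, whereas reversibility is a constraint on alternating cycles of arbitrary length, so properness of a coloring does not by itself make a color class reversible.

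For comparison, the paper does not prove this statement from scratch either: it presents Theorem~\ref{thm:Felsner-Li-Trotter} as an easy corollary of the Brightwell--Trotter theorem (Theorem~\ref{thm:Brightwell-Trotter}), which bounds by $4$ the dimension of the vertex-edge-face poset of any plane drawing of a planar multigraph. That earlier theorem is where the global planar information you are reaching for (Schnyder-type structures, normal families of paths) is actually packaged and proved. So to complete your argument you would either have to reprove that machinery, or take the honest shortcut of embedding the height-$2$ poset into an appropriate vertex-edge-face poset and quoting Brightwell--Trotter, which is what Felsner, Li and Trotter do.
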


Actually, this was obtained as an easy corollary to the
following theorems of Brightwell and Trotter~\cite{bib:Brightwell-Trotter-2,
bib:Brightwell-Trotter-1}, published in 1997 and 1993, respectively (a new
and quite elegant proof of this result has just been obtained by
Felsner~\cite{bib:Felsner}).

\begin{theorem}\label{thm:Brightwell-Trotter}
Let\/ $G$ be a planar multi-graph and let\/ $P$ be the vertex-edge-face
poset determined by a drawing without edge crossings of\/ $G$ in the plane.
Then\/ $\dim(P)\le4$. Furthermore, if\/ $G$ is a simple, $3$-connected planar
graph, then the subposet determined by the vertices and faces is\/
$4$-irreducible.
\end{theorem}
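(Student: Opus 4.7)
The plan is to construct an explicit realizer $\{L_1, L_2, L_3, L_4\}$ of the vertex-edge-face poset $P$ of a plane multigraph $G$. The four linear extensions will be anchored at four reference objects: the three vertices of an outer triangle and a fourth ``global'' reference. Before building the realizer I would reduce to the case where $G$ is a simple plane triangulation, by augmenting $G$ to a triangulation $G'$ (suppressing parallel edges carefully, subdividing where needed to preserve face--edge incidences, and triangulating each face) in such a way that the vertex--edge--face poset of $G$ embeds as a subposet of that of $G'$; monotonicity of dimension under subposets then hands off the problem to the triangulated setting.

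On the triangulation I would use a Schnyder-wood decomposition: fix an outer triangle $a_1 a_2 a_3$ and orient/$3$-color the inner edges so that each inner vertex has exactly one outgoing edge of each color, with the colors appearing in the prescribed cyclic order around the vertex. The three color classes form trees $T_1, T_2, T_3$ rooted at $a_1, a_2, a_3$, and for each inner vertex $v$ the three root-paths split the interior of the outer triangle into three regions $R_1(v), R_2(v), R_3(v)$, opposite to the respective roots. The region containments define partial orders $\le_1, \le_2, \le_3$ on the vertex set. I would extend each $\le_i$ to the faces by the analogous construction in the dual triangulation, and to the edges by inserting each edge consistently between its two endpoints and its two incident faces; linearizing arbitrarily yields $L_1, L_2, L_3$, and $L_4$ is then chosen to clean up the incomparabilities involving the outer face.

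The main obstacle is verifying that $L_1 \cap L_2 \cap L_3 \cap L_4$ is exactly $P$. For every incomparable pair $(x,y)$ of elements of $P$ one must exhibit an index $i$ with $y <_{L_i} x$. The easy cases are pairs of the same type (two vertices, two edges, or two faces sharing geometric ``witness''), where the Schnyder regions directly give the reversal. The delicate cases are non-incident vertex--face and edge--face pairs, where the argument has to use both the Schnyder wood in $G$ and the companion structure in $G^\ast$, exploiting the identity ``sum of Schnyder coordinates is constant'' to pinpoint the coordinate $i$ in which the face falls behind the vertex or edge. A careful case analysis, driven by the cyclic order of the three colors at every vertex, finishes the first part of the theorem.

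For the irreducibility statement, I would show that the vertex--face subposet $Q$ of a simple $3$-connected plane graph contains $S_4$, giving $\dim(Q) \ge 4$, and then check element by element that removing any single vertex or face allows one of the four linear extensions to be collapsed into another, producing a $3$-realizer of $Q - z$. The embedding of $S_4$ exploits the fact that in a simple $3$-connected planar graph one can locate four pairwise non-incident vertex--face pairs with the required incidence pattern. The element-by-element collapse uses that removing $z$ eliminates exactly the pairs whose reversal forced the fourth linear extension into the realizer, so the remaining incomparabilities can be redistributed among three orders obtained from the Schnyder coordinates relative to $a_1, a_2, a_3$.
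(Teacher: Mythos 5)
First, a remark on scope: this theorem is not proved in the paper at all; it is quoted as background from Brightwell and Trotter's papers on the order dimension of convex polytopes and of planar maps, so there is no in-paper argument to compare yours against. Judged on its own terms, your Schnyder-wood plan is in the right spirit (the actual proofs do use structures of this flavor, namely normal families of paths generalizing Schnyder woods), but it has two concrete gaps. The first is the opening reduction: if you triangulate $G$ to obtain $G'$, the vertex--edge--face poset of $G$ does \emph{not} embed as a subposet of that of $G'$. A face $f$ of $G$ with many boundary vertices is destroyed by the triangulation; no single face of $G'$ lies above all the vertices and edges that were below $f$ in $P$, so there is nowhere to send $f$ that preserves the order relation, and monotonicity of dimension under subposets cannot be invoked. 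The published proofs avoid this by working with the planar map directly (or, in the $3$-connected case, with the graph and its dual simultaneously); handling multiple edges and low connectivity is in fact one of the genuinely delicate points of the 1997 paper.

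The second gap is in the irreducibility argument. You propose to get $\dim(Q)\ge 4$ by locating a copy of $S_4$ inside the vertex--face poset $Q$. But $S_4$ itself has dimension $4$, so a $4$-irreducible poset cannot \emph{properly} contain $S_4$: removing any element outside that copy would leave the dimension at $4$, contradicting irreducibility. Consequently $Q$ contains $S_4$ only in the degenerate case where $Q$ \emph{is} $S_4$ (the tetrahedron, with $4$ vertices each incident to $3$ of the $4$ faces); for every other simple $3$-connected planar graph your proposed lower-bound witness does not exist, and indeed the dimension-$4$ lower bound for vertex--face posets of convex polytopes is established in Brightwell--Trotter by a substantially subtler argument. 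The element-by-element collapse you describe for the upper half of irreducibility is plausible in outline but is carried by the part of the argument that is missing.
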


The inequality in Theorem \ref{thm:Felsner-Li-Trotter} is best possible as
evidenced by the standard example $S_4$. Noting that the poset $P_d$ in 
Kelly's construction has height $d+1$, Felsner, Li and 
Trotter~\cite{bib:Felsner-Li-Trotter} conjectured
the following generalization, which was proved by Streib and
Trotter~\cite{bib:Streib-Trotter} in 2012.

\begin{theorem}\label{thm:Streib-Trotter}
For every positive integer\/ $h$, there is a least positive integer\/ $c_h$
so that if\/ $P$ is a poset with a planar cover graph and the height of\/ $P$
is at most\/ $h$, then\/ $\dim(P)\le c_h$.
\end{theorem}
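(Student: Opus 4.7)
The plan is to proceed by induction on $h$, with Theorem~\ref{thm:Felsner-Li-Trotter} serving as the base case $h=2$ and the case $h=1$ trivial since antichains have dimension at most two. For the inductive step, fix $h \ge 3$, a poset $P$ of height $h$ whose cover graph $G$ is planar, and a planar embedding of $G$. I would first reduce the problem to controlling ``top-to-bottom'' critical pairs: by a standard argument, $\dim(P)$ is bounded above by a function of $h$ together with the chromatic number of the incomparability hypergraph on the set $U$ of min--max critical pairs $(a,b)$ (with $a$ minimal in $P$, $b$ maximal in $P$, and $a \not\le b$); the remaining slack is absorbed by applying the inductive hypothesis to the subposets obtained by stripping off minimal and maximal elements, each of which has planar cover graph and smaller height.

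The core of the argument is then to assign a bounded-size \emph{signature} to each pair $(a,b) \in U$, read off from the planar embedding. A natural choice records the cyclic rotation of upper covers around $a$ and of lower covers around $b$, the face of $G$ in which the pair sits relative to certain fixed reference paths, and a succinct ``type'' of a canonical $G$-path witnessing a chosen route from $a$ upward to $b$ through intermediate levels. Because $h$ is fixed and $G$ is planar, the number of distinct signatures is bounded by a function of $h$ alone; a Ramsey-style application then reduces the problem to a large monochromatic subfamily $U' \subseteq U$ on which all pairs share a common signature.

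The main obstacle, and where planarity is used essentially, is to show that if $|U'|$ exceeds a threshold depending only on $h$, then one can find pairs $(a_1, b_1),(a_2, b_2) \in U'$ together with witness chains for the comparabilities $a_1 \le b_2$ and $a_2 \le b_1$ in $P$ whose realizations as paths in $G$ are forced to cross, contradicting planarity. This geometric step is delicate: unlike in the height-two situation treated by Felsner, Li, and Trotter, witness paths can weave through several intermediate levels of $P$, so one must use the shared signature together with the inductive hypothesis on subposets of smaller height to confine these paths tightly enough to force a genuine planar crossing. Bounding $|U'|$ bounds the chromatic number of the incomparability hypergraph on $U$, which in turn bounds $\dim(P)$, completing the induction.
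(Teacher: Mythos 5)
Your proposal does not close the argument. The decisive step---showing that a sufficiently large family $U'$ of min--max critical pairs sharing a common signature forces two witness paths in the planar drawing to cross---is exactly where all of the difficulty of the theorem lives, and you do not supply it; you only record that it is ``delicate'' and that one ``must confine these paths tightly enough.'' Nothing in the proposal explains why a shared rotation/face/path-type signature prevents the witness chains for $a_1\le b_2$ and $a_2\le b_1$ from weaving around one another through the intermediate levels, and the inductive hypothesis is a \emph{dimension} bound for shorter posets, which gives no geometric control over where cover-graph paths are drawn and hence cannot be used to ``confine'' anything. As written, the core of the proof is missing. There is also a defect in your opening reduction: the incomparable pairs of $P$ are not exhausted by the min--max critical pairs together with the incomparable pairs of the subposet obtained by deleting all minimal and maximal elements. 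Pairs in which exactly one coordinate is minimal (or maximal), or both are minimal, or both are maximal, are covered by neither part, so bounding the chromatic number on $U$ and invoking induction on the stripped subposet does not bound $\dim(P)$. A correct reduction to min--max pairs does exist and is used in \cite{bib:Streib-Trotter}, but it is a separate lemma, not the stripping-plus-induction argument you describe.

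For comparison, the paper does not reprove this theorem from scratch: it obtains Theorem~\ref{thm:Streib-Trotter} as a corollary of Theorem~\ref{thm:main}. One first applies the reduction from \cite{bib:Streib-Trotter} that restricts attention to posets whose cover graph has diameter bounded in terms of the height; since planar graphs have the diameter tree-width property, such a cover graph has tree-width bounded by a function of $h$, and Theorem~\ref{thm:main} then bounds the dimension by a function of $h$ alone. If you want a proof in that spirit, the substance lies in the bounded-diameter reduction and in the main theorem, neither of which your sketch engages with; if instead you want to reconstruct the original planarity argument of Streib and Trotter, you must actually carry out the crossing argument, which in the original paper is long and uses several iterations of Ramsey theory rather than a single signature refinement.
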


We have $c_1=2$ and $c_2=4$. For $h\ge 3$, the upper bound on the constant
$c_h$ produced in the proof of Theorem \ref{thm:Streib-Trotter}
is very large, as several iterations of Ramsey theory are used. From
below, it is straightforward to modify Kelly's original construction and
decrease the height to $d-1$. This can be accomplished by deleting $a_1$, $a_d$,
$b_1$ and $b_d$ and relabelling $z_1$, $z_{d-1}$, $w_1$ and $w_{d-1}$ as $a_1$,
$b_d$, $b_1$ and $a_d$, respectively. Wiechert~\cite{bib:Wiechert}
constructed a planar poset $Q$ of height~$3$ with $\dim(Q)=5$; however, this
construction does not seem to generalize for larger values of $d$. Accordingly,
when $d\ge6$, we do not know whether there is a planar poset $P$ of height
$d-2$ with $\dim(P)=d$. On the other hand, Streib and
Trotter~\cite{bib:Streib-Trotter} showed that for each $d\ge5$, there is a poset
$P$ of height $d-2$ with $\dim(P)=d$ so that the cover graph of $P$ is planar.

Theorems~\ref{thm:Baker-Fishburn-Roberts} and~\ref{thm:Trotter-Moore},
as well as Corollary~\ref{cor:Trotter-Moore} all
provide conditions where the dimension of a planar poset can be bounded
independent of its height.  In~\cite{bib:Felsner-Trotter-Wiechert},
Felsner, Trotter and Wiechert gave the following additional results of
this nature.

\begin{theorem}\label{thm:Felsner-Trotter-Wiechert} 
Let\/ $P$ be a poset. 
\begin{enumerate}
\item If the cover graph of\/ $P$ is outerplanar, then\/ $\dim(P)\le 4$. 
\item If the comparability graph of\/ $P$ is planar, then\/ $\dim(P)\le 4$.
\end{enumerate}
\end{theorem}

Both inequalities in Theorem~\ref{thm:Felsner-Trotter-Wiechert} are 
best possible.  The proof of the first inequality in 
Theorem~\ref{thm:Felsner-Trotter-Wiechert} is
relatively straightforward, but it takes a bit of work to show that it is best
possible.  However, the second inequality in 
Theorem~\ref{thm:Felsner-Trotter-Wiechert} is quite different, and now
the argument depends on the full strength of the Brightwell-Trotter 
inequality for the dimension of the vertex-edge-face poset determined 
by a drawing of a planar multi-graph, with the edges now playing a key role.

To the best of our knowledge, the following observation concerning Kelly's 1981
construction was not made until 2012:\enspace The cover graphs of the posets in
this construction have bounded tree-width. In fact, they have bounded
path-width.  We leave the following elementary observations as an exercise.

\begin{exercise}\label{exe:kelly-pw} 
Let\/ $d\ge 2$, let\/ $P_d$ be the poset illustrated in Kelly's construction, and 
let\/ $G_d$ be the cover graph of\/ $P_d$.  Then the height of\/ $P_d$ is\/ $d+1$, and
the path-width of\/ $G_d$ is at most\/~$3$. In fact, when\/ $d\ge5$, $G_d$ 
contains\/ $K_4$ as a minor, so its path-width is exactly\/~$3$. 
\end{exercise}

We made some effort to construct large dimension posets with bounded height and
cover graphs having bounded tree-width and were unable to do so. So consider the
following additional information:

\begin{enumerate} 
\item A poset whose cover graph has tree-width $1$ has dimension at most~$3$. 
\item A poset whose cover graph is outerplanar has dimension at most~$4$. As 
noted previously, outerplanar graphs can have arbitrarily large path-width, 
but they have tree-width at most~$2$. 
\item On the one hand, the tree-width of the cover graph of a planar poset 
can be arbitrarily large, even when the height of $P$ is~$2$. As an 
example, just take a height~$2$ poset whose cover graph is an $n\times n$ 
grid. On the other hand, the proof given by Streib and Trotter%
~\cite{bib:Streib-Trotter} to show that the dimension of a poset with a 
planar cover graph can be bounded in terms of its height used
a reduction to the case where the cover graph of the poset is both planar and
has diameter bounded in terms of the height of the poset. Again, as noted
previously, a planar graph of bounded diameter has bounded tree-width.
\end{enumerate}

Taking into consideration this body of evidence together with our inability to
prove otherwise, it is natural to conjecture that the dimension of a poset is
bounded in terms of its height and the tree-width of its cover graph, and this
is what we now prove.

\section{Proof of the Main Theorem}\label{sec:proof}

\subsection{Preliminaries} 

A subset $I$ of $\Inc(P)$ is said to be
\textit{reversible} if there is a linear extension $L$ of $P$ with $x>y$ in $L$
for every $(x,y)\in I$. It is then immediate that $\dim(P)$ is the least positive
integer $d$ so that there is a partition $\Inc(P)=I_1\cup I_2\cup\dots\cup I_d$
with each $I_i$ reversible. In view of this formulation, it is handy to have a
simple test to determine whether a given subset $I$ of $\Inc(P)$ is reversible.

Let $k\ge 2$. An indexed subset $I=\{(x_i,y_i):1\le i\le k\}$ of $\Inc(P)$ is
called an \textit{alternating cycle} when $x_i\le y_{i+1}$ in $P$ for each
$i\in\{1,2,\dots,k\}$, where we interpret the subscripts cyclically (i.e., we
require $x_k\le y_1$ in $P$). Reversing an alternating cycle $I$ would require a
linear extension in which the cyclic arrangement $y_1, x_1, \ldots, y_k, x_k$
alternates between strict inequalities of the form $y_i < x_i$ (needed to reverse
$I$) and inequalities of the form $x_i \le y_{i+1}$ (forced by $P$).
Consequently, alternating cycles are not reversible. The following elementary
lemma, proved by Trotter and Moore in \cite{bib:Trotter-Moore} using slightly
different terminology, states that alternating cycles are the only obstruction
to being reversible.

\begin{lemma}\label{lem:reversible} 
If\/ $P$ is a poset and\/ $I\subseteq\Inc(P)$,
then\/ $I$ is reversible if and only if\/ $I$ contains no alternating cycle.
\end{lemma}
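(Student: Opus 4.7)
The forward direction is essentially the argument sketched just before the statement: if $\{(x_i,y_i):1\le i\le k\}$ is an alternating cycle contained in $I$ and $L$ is a linear extension of $P$ that reverses every pair of $I$, then chaining the strict inequalities $y_i<x_i$ (coming from reversal in $L$) with the $P$-forced inequalities $x_i\le y_{i+1}$ cyclically around the cycle yields $y_1<y_1$ in $L$, a contradiction.

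For the converse, the strategy is to build a partial order $Q$ on the ground set of $P$ that simultaneously contains $\le_P$ and the reversed relations $y\le x$ for every pair $(x,y)\in I$; then any linear extension of $Q$ does the job. Concretely, let $Q$ be the transitive closure of the binary relation
\[
R=\{(a,b):a\le b\text{ in }P\}\cup\{(y,x):(x,y)\in I\}.
\]
Reflexivity and transitivity of $Q$ are automatic, so the whole argument reduces to showing antisymmetry. Once that is established, any linear extension $L$ of $Q$ is a linear extension of $P$, and for each $(x,y)\in I$ we have $y\le_Q x$ together with $x\ne y$ (since $x\parallel y$ forces distinctness), whence $y<x$ in $L$, and $L$ reverses $I$, as desired.

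The main (and essentially only) obstacle is therefore to rule out a nontrivial $R$-cycle $c_0,c_1,\dots,c_m=c_0$, using the hypothesis that $I$ contains no alternating cycle. Given such a cycle, I would collapse maximal runs of consecutive $\le_P$-steps via transitivity of $P$, and insert trivial $\le_P$-steps between any two consecutive $I$-reversal steps, rewriting the cycle in the standard form
\[
b_1\to a_1\le_P b_2\to a_2\le_P\cdots\le_P b_k\to a_k\le_P b_1,
\]
where each arrow $b_j\to a_j$ records a pair $(a_j,b_j)\in I$. Reading off the indexed set $\{(a_j,b_j):1\le j\le k\}$ then produces an alternating cycle in $I$ (taking $x_j=a_j$, $y_j=b_j$) as soon as $k\ge 2$, contradicting the hypothesis. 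The corner case $k=1$ is excluded because $(a_1,b_1)\in I$ forces $a_1\parallel b_1$, whereas the cycle would also contain the step $a_1\le_P b_1$. Combining these observations finishes the antisymmetry check and hence the lemma.
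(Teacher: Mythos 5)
Your proof is correct. Note that the paper does not actually prove this lemma: it sketches only the ``only if'' direction in the paragraph preceding the statement (exactly the chaining argument you give) and cites Trotter and Moore for the rest. Your converse is the standard argument from that reference: form the transitive closure $Q$ of $\le_P$ together with the reversed pairs, observe that reflexivity and transitivity are free, and reduce antisymmetry to the nonexistence of an alternating cycle by collapsing runs of $\le_P$-steps in a hypothetical $R$-cycle and padding with trivial steps where two reversal steps are adjacent. Your handling of the degenerate case $k=1$ (excluded because $(a_1,b_1)\in I$ forces $a_1\parallel b_1$ while the cycle forces $a_1\le b_1$ in $P$) is exactly the point that makes the reduction to the definition with $k\ge 2$ work. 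The only cosmetic loose end is that the extracted indexed family $\{(a_j,b_j):1\le j\le k\}$ could a priori repeat a pair; this is harmless, since one may take the $R$-cycle with the fewest reversal steps, or extract a shorter alternating cycle between two occurrences of a repeated pair.
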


For the remainder of this section, we fix integers $t$ and $h$, assume that $P$
is a poset with height $h$ and cover graph $G$, and assume that the tree-width
of $G$ is $t$. Of course, we may also assume that $\Inc(P)\neq \emptyset$. The
remainder of the argument is organized to show that we can partition the set
$\Inc(P)$ into $d$ reversible sets, where $d$ is bounded in terms of $t$ and
$h$.

Let $X$ denote the ground set of $P$, so that $X$ is also the vertex set of the
cover graph $G$. Since the tree-width of $G$ is $t$, there is a tree $T$ and a
family $\cgF=\{T(x):x\in X\}$ of subtrees of $T$ such that~(1)~for each vertex
$u$ of $T$, the number of elements $x$ of $X$ with $u\in T(x)$ is at most $t+1$,
and (2)~for each edge $xy$ of $G$, we have $T(x)\cap T(y)\ne\emptyset$.

Let $H$ be the intersection graph determined by the family $\cgF$ of subtrees
of $T$ (some researchers refer to $H$ as the \textit{chordal completion} of $G$).
Evidently, the tree-width of $H$ is $t$, and every edge of $G$ is an edge of
$H$. Of course, the set $X$ is also the vertex set of $H$. In the discussion to
follow, we will go back and forth, without further comment, between referring to
members of $X$ as elements of the poset $P$ and as vertices in the cover graph
$G$ and the intersection graph $H$.

To help distinguish between vertices of $T$ and elements of $X$, we will use the
letters $r$, $u$, $v$ and $w$ (possibly with subscripts) to denote vertices of
the tree $T$, while the letters $x$, $y$ and $z$ (again with subscripts) will be
used to denote members of $X$. The letters $i$, $j$, $k$, $\ell$, $m$ and $n$
will denote non-negative integers with the meaning of $n$ fixed by setting
$n=|X|$. The Greek letters $\phi$ and $\tau$ will denote proper colorings of the
graph $H$. The colors assigned by $\phi$ will be positive integers, while the
colors assigned by $\tau$ will be sets of triples. Later, we will define a
function $\sigma$ which assigns to each incomparable pair $(x,y)$ a
\textit{signature}, to be denoted $\sigma(x,y)$. We will use the Greek letter
$\Sigma$ to denote a signature. The number of signatures will be the value $d$,
and we will use Lemma~\ref{lem:reversible} to show that any set of incomparable
pairs having the same signature is reversible. Of course, we must be careful to
insure that $d$ is bounded in terms of $t$ and $h$.

We consider the tree $T$ as a \textit{rooted} tree by taking an arbitrary vertex
$u_0$ of $T$ as root. Draw the tree without edge crossings in the canonical
manner. The root is at the bottom, and each vertex that is not the root has a
unique neighbor below---its parent (equipped with such a drawing, $T$ is called
a \textit{planted tree}). We suggest such a drawing in 
Figure~\ref{fig:planted-tree}.

\begin{figure}
\begin{center}
\includegraphics[scale=\figurescale]{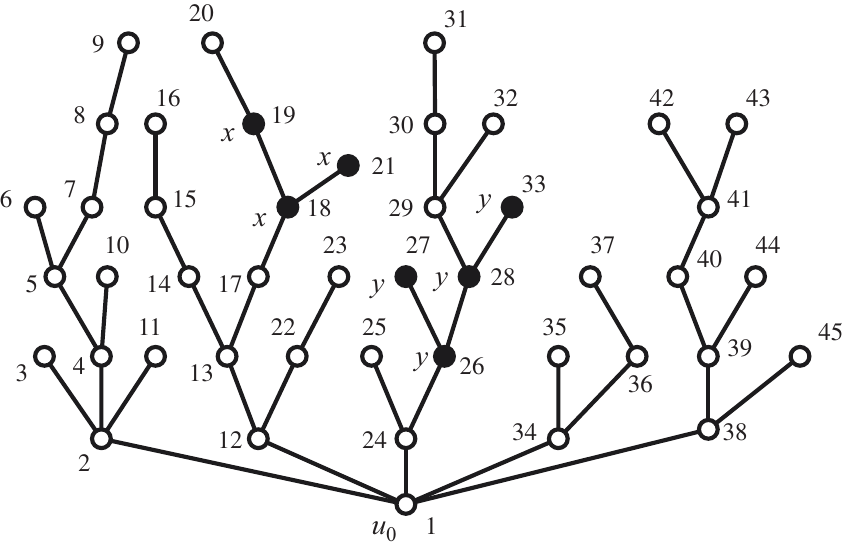}
\end{center}
\caption{A tree with root $u_0$ and vertices labelled using number
$1,2,\dots,45$ according to the depth-first, left-to-right search order $L_1$.
Two subtrees $T(x)=\{18,19,21\}$ and $T(y)=\{26,27,28,33\}$ are marked with the
darkened points. The root $r(x)$ of $T(x)$ is $18$, and the root $r(y)$ of
$T(y)$ is $26$.
\label{fig:planted-tree}}
\end{figure}

For each $x\in X$, let $r(x)$ denote the root of the subtree $T(x)$, i.e., the
unique vertex of $T(x)$ that is closest to the root $u_0$ of $T$. Expanding
vertices of $T$ if necessary, we may assume that $r(x)\ne r(y)$ whenever $x$ and
$y$ are distinct elements of $X$.

The tree $T$ may be considered as a poset by setting $u\le v$ in $T$ when $u$
lies on the path from $v$ to $u_0$ in $T$. Let $L_1$ denote the depth-first,
left-to-right search order of $T$. Let $L_2$ denote the depth-first,
right-to-left search order of $T$. It follows that $u\le v$ in $T$ if and only
if $u\le v$ in $L_1$ and $u\le v$ in $L_2$\footnote{Note that the poset obtained
by adding a one to $T$ is planar.  Now the argument given 
in~\cite{bib:Baker-Fishburn-Roberts} implies that $\dim(T)\le2$, as evidenced
by these two linear extensions.}. This shows $\dim(T)\le 2$ with
$\dim(T)=2$ unless $L_1=L_2$. It is natural to say that $u$ is \textit{left} of
$v$ in $T$, when $u<v$ in $L_1$ and $v<u$ in $L_2$. Also, we say that $u$ is
\textit{below} $v$ in $T$ when $u<v$ in $T$. When $u$ and $v$ are distinct
elements of $T$, exactly one of the following four statements holds: (1)~$u$ is
below $v$ in $T$; (2)~$v$ is below $u$ in $T$; (3)~$u$ is left of $v$ in $T$;
and (4)~$v$ is left of $u$ in $T$.

The \textit{lowest common ancestor} of two vertices $u$ and $v$ of $T$, 
denoted $u\wedge v$, is the greatest vertex $w$ with $w\le u$ and $w\le v$ in $T$.

\subsection{Induced Paths in the Intersection Graph}

Observe that $xy$ is an edge of the graph $H$ if and only if one of the
following statements is true: (1)~$r(x)<r(y)$ in $T$ and $r(y)\in T(x)$,
(2)~$r(y)<r(x)$ in $T$ and $r(x)\in T(y)$.

We write $x\close{k}y$ when there is a sequence $(z_0,z_1,\dots,z_m)$ of
elements of $X$ such that $0\le m\le k$, $z_0=x$, $z_m=y$, and $r(z_i)\in
T(z_{i+1})$ for each $i\in\{0,1,\dots,m-1\}$. Note that a shortest such sequence
is an induced path in the graph $H$. Therefore, we could alternatively have
written this definition as follows: $x\close{k}y$ when there is an induced path
$(z_0,z_1,\dots,z_m)$ in $H$ with $0\le m\le k$, $z_0=x$, $z_m=y$, and
$r(z_0)>r(z_1)>\dots>r(z_m)$ in $T$. As it will turn out, our proof will use the
relation $\close{k}$ for $k\le 2h-2$.

\begin{lemma}\label{lem:close}
The relation\/ $\close{k}$ has the following
properties:
\begin{enumerate}
\item if\/ $x\close{k}y$ and\/ $k\le\ell$, then\/ $x\close{\ell}y$, 
\item $x\close{0}y$ if and only if\/ $x=y$, 
\item $x\close{k+\ell}z$ if and only if there exists\/ $y\in X$ with\/ $x\close{k}y$ and\/
$y\close{\ell}z$, 
\item if\/ $x\close{k}y$, then\/ $r(y)\le r(x)$ in\/ $T$, 
\item if\/ $x\close{k}z$ and\/ $r(z)\le r(y)\le r(x)$ in\/ $T$, then\/ $y\close{k}z$, 
\item if\/ $x\close{k}y$ and\/ $x\close{k}z$, then\/ $y\close{k}z$ or\/ $z\close{k}y$, 
\item for each\/ $x\in X$, $|\{y\in X:x\close{k}y\}|\le 1+t+t^2+\dots+t^k$.
\end{enumerate}
\end{lemma}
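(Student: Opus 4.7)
The plan is to dispatch parts (1)--(4) directly from the definition of $\close{k}$. Part (1) is immediate since any sequence of length $m\le k$ also satisfies $m\le\ell$. Part (2) follows because $k=0$ forces the witness to be the singleton $(x)$, whence $x=y$, and conversely $(x)$ witnesses $x\close{0}x$. For (3), the reverse direction is concatenation: a witness $(z_0,\dots,z_m)$ for $x\close{k}y$ and a witness $(w_0,\dots,w_p)$ for $y\close{\ell}z$ paste together (at $z_m=y=w_0$) into a witness of length $m+p\le k+\ell$ for $x\close{k+\ell}z$. For the forward direction, given a witness of length $m\le k+\ell$ for $x\close{k+\ell}z$, either set $y=z$ (when $m\le k$), and invoke (1), or take $y=z_k$ and split the witness into its first $k+1$ and last $m-k+1$ entries. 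Part (4) follows by induction from $r(z_{i+1})\le r(z_i)$ in $T$, which holds because $r(z_i)\in T(z_{i+1})$ and $r(z_{i+1})$ is by definition the vertex of $T(z_{i+1})$ closest to $u_0$.

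The heart of (5) and (6) is the tree-geometric observation that along any witness $(z_0=x,\dots,z_m=z)$ for $x\close{k}z$, the roots $r(z_0)\ge r(z_1)\ge\dots\ge r(z_m)$ form a chain in $T$, so they all lie on the unique path in $T$ from $r(x)$ to $r(z)$. For (5), the hypothesis $r(z)\le r(y)\le r(x)$ puts $r(y)$ on the very same path. I would pick the largest index $i$ with $r(y)\le r(z_i)$ in $T$. If $i=m$, then $r(y)=r(z)$, so $y=z$ by the injectivity of $x\mapsto r(x)$, and $y\close{0}z$ suffices. Otherwise the maximality of $i$ together with the comparability of $r(y)$ and $r(z_{i+1})$ (both on the path) force $r(z_{i+1})\le r(y)\le r(z_i)$; since $T(z_{i+1})$ is a subtree of $T$ containing both $r(z_{i+1})$ and $r(z_i)$, it contains the entire path between them, and in particular $r(y)\in T(z_{i+1})$. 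Then $(y,z_{i+1},\dots,z_m)$ is a valid witness of length $m-i\le k$, giving $y\close{k}z$. For (6), part (4) shows that $r(y)$ and $r(z)$ are both ancestors of $r(x)$ in $T$, hence comparable, so one of $r(z)\le r(y)\le r(x)$ or $r(y)\le r(z)\le r(x)$ holds, and applying (5) accordingly yields (6).

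The part I expect to be trickiest is (7), because the naive union bound ``every vertex of $T$ lies in at most $t+1$ subtrees of $\cgF$'' only delivers the much weaker estimate $(t+1)^k$. To reach $1+t+\dots+t^k$ I would stratify. Fix $x\in X$, set $B_k=\{y\in X:x\close{k}y\}$ and $B_{-1}=\emptyset$, and prove by induction on $k$ that $|B_k\setminus B_{k-1}|\le t^k$, with base $|B_0\setminus B_{-1}|=|\{x\}|=1$. For the inductive step, fix $y\in B_{k+1}\setminus B_k$ and a shortest witness $(z_0,\dots,z_{k+1})$, which must have length exactly $k+1$. Then $z_k\in B_k$ automatically, but in fact $z_k\in B_k\setminus B_{k-1}$: otherwise composing $x\close{k-1}z_k$ with $z_k\close{1}y$ via (3) would give $x\close{k}y$, contradicting $y\notin B_k$. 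Moreover $y\ne z_k$ and $r(z_k)\in T(y)$, so $y$ is one of at most $t$ vertices distinct from $z_k$ whose subtree contains $r(z_k)$. Summing over $z_k\in B_k\setminus B_{k-1}$ gives $|B_{k+1}\setminus B_k|\le t\cdot|B_k\setminus B_{k-1}|\le t^{k+1}$, and summing over the strata yields $|B_k|\le 1+t+\dots+t^k$, as claimed.
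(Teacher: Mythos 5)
Your proof is correct and follows essentially the same route as the paper's: parts (1)--(4) from the definition, part (5) by locating $r(y)$ inside one of the subtrees $T(z_{i+1})$ along the witness path, part (6) via comparability of $r(y)$ and $r(z)$ plus (5), and part (7) from the fact that each $r(z)$ lies in at most $t$ subtrees other than $T(z)$. Your stratified counting for (7) simply spells out the details that the paper leaves implicit.
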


\begin{proof}
Properties (1)--(4) follow directly from the definition of
$\close{k}$. To see (5), let $(z_0,z_1,\dots,z_m)$ be a sequence of elements of
$X$ such that $0\le m\le k$, $z_0=x$, $z_m=z$, and $r(z_i)\in T(z_{i+1})$ for
each $i\in\{0,1,\ldots,m-1\}$. Now note that since $(z_0,\ldots,z_m)$ is a path
in $H$ and $r(x) = r(z_0) \in T(z_1)$, the union $\bigcup_{i=1}^{m} T(z_i)$ is a
subtree of $T$ containing the path from $r(x)$ to $r(z)$. In particular,
$r(y)\in \bigcup_{i=1}^{m} T(z_i)$, so there must be a positive $i$ with
$r(y)\in T(z_i)$, and $(y,z_i,\ldots,z_m)$ witnesses $y\close{k} z$. 
To see (6), observe that $x\close{k}y$ and
$x\close{k}z$ imply $r(y)\le r(z)\le r(x)$ or $r(z)\le r(y)\le r(x)$ in $T$, and
the conclusion follows from (5). Finally, the fact that $t$ is the tree-width of
$H$ yields $|\{z'\in X-\{z\}:r(z)\in T(z')\}|\le t$ for each $z\in X$, whence
(7) follows.
\end{proof}

\noindent We will use the properties listed in Lemma \ref{lem:close} implicitly,
without further reference.

\begin{lemma}\label{lem:low-point}
If\/ $x\le y$ in\/ $P$, then there exists\/ $z\in
X$ such that:
\begin{enumerate}
\item $x\le z\le y$,
\item $x\close{h-1}z$ and\/ $y\close{h-1}z$.
\end{enumerate}
\end{lemma}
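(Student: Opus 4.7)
The plan is to take a saturated chain from $x$ to $y$ in $P$ and let $z$ be the element of the chain whose root in $T$ is closest to $u_0$. So fix a saturated chain $x = x_0 \lessdot x_1 \lessdot \cdots \lessdot x_m = y$; since the height of $P$ is $h$, we have $m \le h-1$. For each $i$, the cover edge $x_i x_{i+1}$ lies in $G$ and hence in $H$, so $T(x_i) \cap T(x_{i+1}) \ne \emptyset$. Picking any $u$ in that intersection, both $r(x_i)$ and $r(x_{i+1})$ are ancestors of $u$ in $T$, hence $T$-comparable; moreover, if $r(x_i) \le r(x_{i+1})$ in $T$, the convexity of the subtree $T(x_i)$ (which contains both $r(x_i)$ and $u$) forces $r(x_{i+1}) \in T(x_i)$, and symmetrically if $r(x_{i+1}) \le r(x_i)$. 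A short induction on $m$, using that the ancestors of any vertex form a chain in $T$, shows that $\{r(x_0), \ldots, r(x_m)\}$ admits a $T$-minimum, which is unique by the standing assumption that distinct elements of $X$ have distinct roots. Set $z := x_j$ where $r(x_j)$ is this $T$-minimum; then $x \le z \le y$ in $P$ by construction.

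It remains to prove $x \close{j} z$ and $y \close{m-j} z$, since property~(1) of Lemma~\ref{lem:close} together with $j, m-j \le m \le h-1$ then delivers $x \close{h-1} z$ and $y \close{h-1} z$. I prove $x_0 \close{j} x_j$ by induction on $m$, applied to the generic claim that for any saturated chain $x_0 \lessdot \cdots \lessdot x_m$ in $P$, the index $j$ achieving the $T$-minimum of $\{r(x_0), \ldots, r(x_m)\}$ satisfies $x_0 \close{j} x_j$. The case $j = 0$ is trivial. Otherwise, the subchain $x_1, \ldots, x_m$ has length $m-1$ and $r(x_j)$ is still the $T$-minimum of its roots, so the inductive hypothesis gives $x_1 \close{j-1} x_j$. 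Now I split on the relation between $r(x_0)$ and $r(x_1)$. If $r(x_1) \le r(x_0)$, then by the preceding paragraph $r(x_0) \in T(x_1)$, so the sequence $(x_0, x_1)$ witnesses $x_0 \close{1} x_1$, and property~(3) composes this with $x_1 \close{j-1} x_j$ to give $x_0 \close{j} x_j$. If instead $r(x_0) \le r(x_1)$, then $r(x_j) \le r(x_0) \le r(x_1)$, so property~(5) applied to $x_1 \close{j-1} x_j$ yields $x_0 \close{j-1} x_j$, whence $x_0 \close{j} x_j$ by property~(1). The claim $y \close{m-j} z$ is obtained by applying the same argument to the reversed chain.

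The main subtlety is the observation in the first paragraph that when $r(x_i)$ and $r(x_{i+1})$ are $T$-comparable with, say, $r(x_i) \le r(x_{i+1})$, the higher root actually lies inside the lower root's subtree (not merely as a descendant in $T$). This relies essentially on the convexity of subtrees of $T$, and is what licenses using $(x_i, x_{i+1})$ as a witness to $\close{1}$ whenever roots happen to decrease. Once this is in place, the rest is a routine manipulation within the algebra of $\close{k}$ supplied by Lemma~\ref{lem:close}.
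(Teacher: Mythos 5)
Your proof is correct and follows essentially the same strategy as the paper: walk from $x$ to $y$ along a chain realized as a path in $H$, and take $z$ to be the element whose root is lowest in $T$. The only difference is in how $x\close{h-1}z$ and $y\close{h-1}z$ are certified: the paper shortens the path so that the roots strictly decrease to $r(z)$ and then increase, making the two halves direct witnesses, whereas you keep the saturated chain and absorb any non-monotonicity of the roots via properties (3) and (5) of Lemma~\ref{lem:close}; both are valid.
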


\begin{proof}
Since $H$ contains the cover graph $G$ of $P$, there is a path
$(z_0,z_1,\dots,z_k)$ in $H$ with $z_0=x$, $z_k=y$, and $z_0<z_1<\dots<z_k$ in
$P$. Take the shortest such path. Since $h$ is the height of $P$, we have $k\le
h-1$. For each $i$ with $0\le i<k$, since $z_iz_{i+1}$ is an edge of $H$, we
have $r(z_{i+1})\in T(z_i)$ when $r(z_i)<r(z_{i+1})$ in $T$ or $r(z_i)\in
T(z_{i+1})$ when $r(z_i)>r(z_{i+1})$ in $T$. If there is an index $i$ with
$0<i<k$ and $r(z_{i-1})<r(z_i)>r(z_{i+1})$ in $T$, then we have $r(z_i)\in
T(z_{i-1})\cap T(z_{i+1})$, so $z_{i-1}z_{i+1}$ is an edge of $H$ and we can
obtain a shorter path by removing $z_i$. Therefore, there is a unique index
$i\in\{0,1,\ldots,k\}$ with $r(z_0)>r(z_1)>\dots>r(z_i)<r(z_{i+1})<\dots<r(z_k)$
in $T$. The definition of $\close{k}$ yields $z_0\close{k}z_i$ and
$z_k\close{k}z_i$. Since $k\le h-1$, the conclusion follows for $z=z_i$.
\end{proof}

\subsection{Colorings of the Ground Set}

Order the elements of $X$ as $x_0,x_1,\dots,x_{n-1}$ so that the following
holds: if $r(x_j)\le r(x_i)$ in $T$, then $j\le i$. In particular, we have $j\le
i$ whenever $x_i\close{k}x_j$. Define a coloring $\phi$ of $X$ with positive
integers using the following inductive procedure. Start by setting
$\phi(x_0)=1$. Thereafter, for $1\le i<n$, let $\phi(x_i)$ be the least positive
integer that does not belong to $\{\phi(x_j):0\le j<i$ and
$x_i\close{2h-2}x_j\}$. The reason why we take $\close{2h-2}$ in this
definition will become clear at the very end of the proof. The number of colors
used by $\phi$ is at most $1+t+t^2+\dots+t^{2h-2}$. Actually, we are not that
interested in how many colors $\phi$ will use exactly, except that this number
must be bounded in terms of $t$ and $h$, which it is.

\begin{lemma}\label{lem:coloring} 
If\/ $x\close{2h-2}z$, $x\close{2h-2}z'$ and\/ $\phi(z)=\phi(z')$, then\/ $z=z'$.
\end{lemma}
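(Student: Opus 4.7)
The plan is to combine property (6) of Lemma~\ref{lem:close} with the greedy construction of $\phi$: property (6) will collapse the two hypotheses $x\close{2h-2}z$ and $x\close{2h-2}z'$ into a direct relation between $z$ and $z'$, and then the defining rule of $\phi$ will force $z=z'$.

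Concretely, I would first invoke Lemma~\ref{lem:close}(6) (with the common left-hand element $x$) to conclude that either $z\close{2h-2}z'$ or $z'\close{2h-2}z$. By symmetry, assume $z\close{2h-2}z'$. Next, aim for a contradiction by supposing $z\neq z'$. From Lemma~\ref{lem:close}(4) we have $r(z')\le r(z)$ in $T$; combined with the earlier reduction that all roots $r(\cdot)$ are distinct, this gives the strict inequality $r(z')<r(z)$. By the way the sequence $x_0,\ldots,x_{n-1}$ was ordered (roots increasing in the tree order), writing $z=x_i$ and $z'=x_j$, this strict inequality forces $j<i$.

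Finally, apply the definition of $\phi$. When $\phi(x_i)$ was assigned, it was chosen to be the least positive integer not appearing among $\{\phi(x_\ell):\ell<i \text{ and } x_i\close{2h-2}x_\ell\}$. Since $z\close{2h-2}z'$ means exactly that $x_i\close{2h-2}x_j$ with $j<i$, the value $\phi(x_j)$ was explicitly excluded, so $\phi(z)=\phi(x_i)\neq\phi(x_j)=\phi(z')$, contradicting the hypothesis $\phi(z)=\phi(z')$. Hence $z=z'$, as required.

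There is no real obstacle here: the statement is essentially just the observation that $\phi$ is a proper coloring of the auxiliary graph whose edges are pairs $(y,y')$ with $y\close{2h-2}y'$, and that property (6) of Lemma~\ref{lem:close} says this graph contains a clique on the $\close{2h-2}$-neighborhood of any fixed vertex. The only point requiring care is the strictness $r(z')<r(z)$, which relies on the standing assumption that distinct elements of $X$ have distinct roots in $T$.
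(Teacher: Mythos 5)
Your proposal is correct and follows essentially the same route as the paper: apply Lemma~\ref{lem:close}(6) to get $z\close{2h-2}z'$ or $z'\close{2h-2}z$, and then observe that the greedy definition of $\phi$ forbids equal colors on such a pair when $z\ne z'$. You merely spell out the indexing details that the paper leaves implicit (and note that the strict inequality $r(z')<r(z)$ is not really needed, since $z\ne z'$ already forces $j\ne i$ while the ordering gives $j\le i$).
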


\begin{proof}
Suppose $z\ne z'$. Since $x\close{2h-2}z$ and $x\close{2h-2}z'$,
we have $z\close{2h-2}z'$ or $z'\close{2h-2}z$. Whichever of these holds, the
definition of $\phi$ yields $\phi(z)\ne\phi(z')$.
\end{proof}

Let $(x,z)$ be a pair of elements of $X$ with $x\close{2h-2}z$. There are four
cases of how $x$ and $z$ are related in $P$: (1)~$x=z$, (2)~$x<z$, (3)~$x>z$, or
(4)~$x\parallel z$. We associate with $(x,z)$ a triple
$\pval(x,z)=(\phi(x),\phi(z),t(x,z))$, where $t(x,z)$ is the number in
$\{1,2,3,4\}$ denoting which of the above four cases holds. Since the number of
distinct colors used by $\phi$ is bounded in terms of $t$ and $h$, so is the
number of distinct triples of the form $\pval(x,z)$ for all pairs $(x,z)$
considered.

We define a new coloring $\tau$ of $X$ by assigning to each element $x$ of $X$,
the family $\tau(x)=\{\pval(x,z):z\in X$ and $x\close{2h-2}z\}$. Thus
the colors used by $\tau$ are sets of triples, and the number of distinct colors
used by $\tau$ is bounded in terms of $t$ and $h$. Note that the color classes
of $\tau$ refine the color classes of $\phi$, as the first element of each
triple in $\tau(x)$ is $\phi(x)$, and $\tau(x)$ is non-empty since $x\close{0}
x$.

\begin{lemma}\label{lem:type}
If\/ $x\close{2h-2}z$, $x'\close{2h-2}z$ and\/ $\tau(x)=\tau(x')$, then:
\begin{enumerate}
\item $x\le z$ in\/ $P$ if and only if\/ $x'\le z$ in\/ $P$, 
\item $x\ge z$ in\/ $P$ if and only if\/ $x'\ge z$ in\/ $P$.
\end{enumerate}
\end{lemma}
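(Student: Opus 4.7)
The plan is to exploit the fact that $\tau$ records, for each element, the complete profile of triples $\pval(x,\cdot)$ across its $\close{2h-2}$-reachable companions, and to combine this with the uniqueness statement in Lemma~\ref{lem:coloring}. My strategy is to locate a partner $z'$ for $x'$ whose triple matches that of $(x,z)$, then identify $z'$ with $z$, and read off both (1) and (2) from the third coordinate of $\pval(\cdot,\cdot)$, which by construction encodes the comparability relation.

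First, I would argue that the very same $z$ serves as a witness on the $x'$ side. Since $x\close{2h-2}z$, the triple $\pval(x,z)=(\phi(x),\phi(z),t(x,z))$ lies in $\tau(x)$. Using $\tau(x)=\tau(x')$, this triple also lies in $\tau(x')$, so by the definition of $\tau$ there exists $z'\in X$ with $x'\close{2h-2}z'$ and $\pval(x',z')=\pval(x,z)$; in particular $\phi(z')=\phi(z)$ and $t(x',z')=t(x,z)$.

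The crux is to show $z'=z$, and this is exactly what Lemma~\ref{lem:coloring} delivers: applying it to $x'$, the hypotheses $x'\close{2h-2}z$ (assumed in the statement) and $x'\close{2h-2}z'$ (just produced), together with $\phi(z)=\phi(z')$, force $z=z'$. Consequently $t(x',z)=t(x',z')=t(x,z)$, which means $x$ and $z$ are related in $P$ in exactly the same way as $x'$ and $z$; both equivalences (1) and (2) are immediate.

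I do not expect a real obstacle here. The ingenuity has been absorbed into the definitions of $\phi$ (which is what makes Lemma~\ref{lem:coloring} hold for the radius $\close{2h-2}$) and of $\tau$ (which packages the full list of triples), and the present lemma is essentially the payoff of those choices. The only point that requires a moment of care is to verify that the partner $z'$ extracted from $\tau(x')$ is indeed $\close{2h-2}$-reachable from $x'$ — but this is built into the definition of $\tau$ and so is automatic.
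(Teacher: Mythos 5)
Your proof is correct and is essentially the paper's own argument with the symmetric roles of $x$ and $x'$ swapped: the paper extracts the witness $z'$ on the $x$ side from $\pval(x',z)\in\tau(x')=\tau(x)$, while you extract it on the $x'$ side from $\pval(x,z)\in\tau(x)=\tau(x')$, and both then invoke Lemma~\ref{lem:coloring} to identify $z'$ with $z$ and read off the conclusion from the third coordinate of the triple.
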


\begin{proof}
Since $x'\close{2h-2}z$ and $\tau(x)=\tau(x')$, there is $z'\in X$
with $x\close{2h-2}z'$ and $\pval(x,z')=\pval(x',z)$. In particular, we have
$\phi(z)=\phi(z')$, which implies $z=z'$ in view of Lemma~\ref{lem:coloring}. The
conclusion now follows from $\pval(x,z)=\pval(x',z)$.
\end{proof}

\subsection{Signatures for Incomparable Pairs}

Each incomparable pair $(x,y)$ in $P$ satisfies exactly one of the following
six conditions:
\begin{enumerate}
\item $r(x)$ is below $r(y)$ in $T$,
\item $r(y)$ is below $r(x)$ in $T$, 
\item $r(x)$ is left of $r(y)$ in $T$ and $r(y')$ is left of $r(y)$ in 
$T$ for each $y'\in X$ with $\tau(y')=\tau(y)$ and $x\le y'$ in $P$,
\item $r(x)$ is left of $r(y)$ in $T$ and there exists $y'\in X$
with $\tau(y')=\tau(y)$, $x\le y'$ in $P$ and $r(y')$ not left of $r(y)$ in $T$,
\item $r(y)$ is left of $r(x)$ in $T$ and $r(x')$ is left of $r(x)$ in $T$ for
each $x'\in X$ with $\tau(x')=\tau(x)$ and $x'\le y$ in $P$,
\item $r(y)$ is
left of $r(x)$ in $T$ and there exists $x'\in X$ with $\tau(x')=\tau(x)$, 
$x'\le y$ in $P$ and $r(x')$ not left of $r(x)$ in $T$.
\end{enumerate}
We define the \textit{signature} of $(x,y)$ to be the triple
$\sigma(x,y)=(\tau(x),\tau(y),s(x,y))$, where $s(x,y)$ is the number in
$\{1,2,\dots,6\}$ denoting which of the above six cases holds for $(x,y)$. Since
the number of distinct colors used by $\tau$ is bounded in terms of $t$ and $h$,
so is the number of distinct signatures.

Let $\Inc(P,\Sigma)=\{(x,y)\in\Inc(P):\sigma(x,y)=\Sigma\}$. To finish the proof
of our main theorem, we show that $\Inc(P,\Sigma)$ is reversible for each
signature $\Sigma$. We argue by contradiction. Fix a signature $\Sigma$, and
suppose that $\Inc(P,\Sigma)$ is not reversible. In view of Lemma
\ref{lem:reversible}, $\Inc(P,\Sigma)$ contains an alternating cycle
$\{(x_i,y_i):1\le i\le k\}$. Since all the signatures $\sigma(x_i,y_i)$ are
equal, we have all the $\tau(x_i)$ equal and all the $\tau(y_i)$ equal.
Moreover, all the pairs $(x_i,y_i)$ satisfy the same one of the conditions
(1)--(6) above. This gives us six cases to consider. Case (2) is dual to (1),
(5) is dual to (3), and (6) is dual to (4). Therefore, it is enough that we show
that each of the cases (1), (3) and (4) leads to a contradiction. In the
arguments below, we always interpret the index $i$ cyclically in
$\{1,2,\dots,k\}$.

Suppose that (1) holds for all $(x_i,y_i)$. There must be an index $i$ such that
$r(x_i)$ is not below $r(x_{i-1})$ in $T$. We have $x_{i-1}\le y_i$ in $P$, so
let $z$ be an element of $X$ claimed by Lemma \ref{lem:low-point} for
$(x_{i-1},y_i)$. Thus $x_{i-1}\le z\le y_i$ in $P$, $x_{i-1}\close{h-1}z$, and
$y_i\close{h-1}z$. Since $r(x_i)$ is below $r(y_i)$ and not below $r(x_{i-1})$
in $T$, we have $r(z)\le r(x_i)<r(y_i)$ in $T$ and thus $x_i\close{h-1}z$. We
also have $\tau(x_{i-1})=\tau(x_i)$. Consequently, by Lemma \ref{lem:type}, we
have $x_i\le z\le y_i$ in $P$, which is a contradiction.

If (3) holds for all $(x_i,y_i)$, then we have $r(y_{i+1})$ left of $r(y_i)$ in
$T$ for each $i$, which is clearly a contradiction.

Finally, suppose that (4) holds for all $(x_i,y_i)$. There must be an index $i$
such that $r(x_i)$ is not left of $r(x_{i-1})$ in $T$. To simplify the notation,
let $x=x_i$, $y=y_i$ and $x'=x_{i-1}$. Thus we have $x\parallel y$ and $x'\le y$
in $P$, $\tau(x')=\tau(x)$, $r(x)$ left of $r(y)$ in $T$, and $r(x)$ not left of
$r(x')$ in $T$. Furthermore, since $(x,y)$ satisfies condition (4), there is
$y'\in X$ with $\tau(y')=\tau(y)$, $x\le y'$ in $P$ and $r(y')$ not left of
$r(y)$ in $T$. All this implies that the paths in $T$ connecting $r(x')$ to
$r(y)$ and $r(x)$ to $r(y')$ both pass through $r(x)\wedge r(y)$. Now, let $z$
be an element of $X$ claimed by Lemma \ref{lem:low-point} for $(x',y)$, and $z'$
be an element of $X$ claimed by Lemma \ref{lem:low-point} for $(x,y')$. Thus we
have $x'\le z\le y$ and $x\le z'\le y'$ in $P$, $x'\close{h-1}z$,
$y\close{h-1}z$, $x\close{h-1}z'$, and $y'\close{h-1}z'$. Since $r(z) \le r(x')$
and $r(z) \le r(y)$ in $T$, it follows that $r(z)$ is below every vertex in the
path from $r(x')$ to $r(y)$, and in particular, $r(z) \le r(x) \wedge r(y)$.
Similarly, $r(z')\le r(x)\wedge r(y)$ in $T$. Thus $r(z)\le r(z')$ or $r(z')\le
r(z)$ in $T$. If $r(z)\le r(z')$, then $z'\close{h-1}z$ and thus
$x\close{2h-2}z$. This, by Lemma \ref{lem:type}, implies $x\le z\le y$ in $P$,
which is a contradiction. If $r(z')\le r(z)$, then we get a similar
contradiction $x\le z'\le y$.  This completes the proof of Theorem~\ref{thm:main}.

\section{Questions and Problems}\label{sec:questions}

Our main result establishes the existence of the function $d(t,h)$ without
emphasis on optimizing our bound. Let $p$ be the number of colors used in
$\phi$. The number of signatures of incomparable pairs is at most $6\cdot
2^{8p^2}$. We compute $p\le 1+t+\dots+t^{2h-2} \le t^{2h-1}$, and it follows
that $d(t,h) \le 6\cdot 2^{8t^{4h-2}}$. One immediate challenge is to tighten
the bounds on this function. It may even be true that for each $t$, there is a
constant $c_t$ so that $d(t,h)\le c_th$. It is conceivable that better
techniques may prove an exact formula for $d(t,h)$, for all $t$ and $h$.

As noted in the introductory section, when the tree-width of the cover graph of
$P$ is $1$, $\dim(P)\le3$, independent of the height of $P$. Also, when the
cover graph of $P$ is outerplanar (so it has tree-width at most~$2$),
$\dim(P)\le 4$ independent of the height of $P$. On the other hand, the posets
in Kelly's construction have path-width~$3$. Accordingly, it is natural to raise
the following questions.

\begin{question}\label{que:pw=2}
Does there exist a constant $d_0$ so that if $P$ is a poset and the path-width 
of the cover graph of $P$ is at most $2$, then $\dim(P)\le d_0$?
\end{question}

\begin{question}\label{que:tw=2}
Does there exist a constant $d_1$ so that if $P$ is a poset and the tree-width 
of the cover graph of $P$ is at most $2$, then $\dim(P)\le d_1$?
\end{question}

The first of these two questions was recently settled in the affirmative
by Bir\'{o}, Keller and Young~\cite{bib:Biro-Keller-Young}, and we firmly
believe that the second one has an affirmative answer as well.

Kelly's construction actually raises two other questions.
First, is it true that a planar poset with large dimension contains
a large standard example?  We believe the answer is yes and
make the following conjecture.

\begin{conjecture}\label{con:planar-standard}
For every integer\/ $d\ge 2$, there is an integer\/ 
$D=D(d)$ so that if\/ $P$ is a planar poset with\/ $\dim(P)\ge D$,
then\/ $P$ contains the standard example\/ $S_d$.
\end{conjecture}

Second (and this specific question was posed to us by 
Stanley~\cite{bib:Stanley}), is it true that a planar poset with
large dimension has many minimal elements?  The answer is
yes. Recently, Trotter and Wang~\cite{bib:Trotter-Wang} proved
the following result.

\begin{theorem}\label{thm:Trotter-Wang}
If\/ $P$ is a planar poset with\/ $t$ minimal elements, then\/
$\dim(P)\le 2t+1$.
\end{theorem}

This inequality is best possible for $t=1$ and $t=2$, but for larger 
values of $t$, a lower bound of $t+3$ is proved in~\cite{bib:Trotter-Wang}.

The first of these two questions has a natural extension to tree-width, so we would
also make the following conjecture.

\begin{conjecture}\label{con:treewidth-standard}
For every pair\/ $(d,t)$ of positive integers with\/ $d\ge 2$, there is an integer\/ 
$D=D(d,t)$ so that if\/ $P$ is a poset such that the tree-width of
the cover graph of\/ $P$ is at most\/ $t$ and\/ $\dim(P)\ge D$,
then\/ $P$ contains the standard example\/~$S_d$.
\end{conjecture}

While the second question concerning the number of minimal elements
makes sense, it is easily answered in the negative, since adding
a zero to a poset can increase the tree-width of the cover graph by
at most one.

Finally, we close with what we believe is a very ambitious conjecture.

\begin{conjecture}\label{con:minors}
Let\/ $\cgG$ be a proper minor-closed class of graphs.  Then for every 
integer\/ $h\ge1$, there is a least positive integer\/ $d=d(\cgG,h)$ so that 
if\/ $P$ is a poset of height\/ $h$ and the cover graph of\/ $P$ belongs to\/
$\cgG$, then\/ $\dim(P)\le d$.
\end{conjecture}

Our main theorem shows that the conjecture is true when $\cgG$ is the
class of graphs of tree-width at most $t$. In~\cite{bib:Streib-Trotter}, a
general reduction is described which allows one to restrict to the case where
the cover graph has bounded diameter (as a function of the height). It follows
as an immediate corollary that the conjecture holds whenever $\cgG$ has
the diameter tree-width property.  For this reason, we have an
alternative proof of Theorem~\ref{thm:Streib-Trotter}.
Graphs of bounded genus, and more generally graphs excluding an apex graph
as a minor also have the diameter tree-width property (see~\cite{bib:Eppstein}).
Therefore, the above conjecture also holds in these special cases.

\section*{Updates}

Question \ref{que:tw=2} has been answered in the positive by Joret, Micek,
Trotter, Wang and Wiechert \cite{bib:tw=2}.
Conjecture \ref{con:minors} has been settled in the affirmative by Walczak~\cite{bib:Walczak}.

\section*{Acknowledgments}

The proof of our main theorem has been developed by the authors at a series of
meetings at conferences and campus visits, as well as through email. However, we
have also received valuable input from several other colleagues in our
respective university environments. We would also like to thank anonymous
referees who made helpful suggestions regarding the exposition and organization
of material in this paper.

The last five authors gratefully acknowledge that the first author, Gwena\"{e}l
Joret, is solely responsible for conjecturing our main theorem.

\end{document}